\newcommand{\commentout}[1]{}
\newcommand{\RN}[1]{%
  \textup{\uppercase\expandafter{\romannumeral#1}}%
}
\newcommand{\R}{\mathbb{R}}
\newcommand {\e}  {\varepsilon}
\newcommand {\Chi} {{\bf \raise 2pt \hbox{$\chi$}} }
\newcommand {\ep}  {\epsilon}
\newcommand{\beq}{\begin{equation}}
\newcommand{\eeq}{\end{equation}}
\newcommand{\bea} {\begin{array}{rl}}
\newcommand{\eea} {\end{array}}
\newcommand{\bepa}{\left\{ \begin{array}{l}}
\newcommand{\eepa} {\end{array}\right.}
\newtheorem{theorem}{Theorem}[section]
\newtheorem{lemma}[theorem]{Lemma}
\newtheorem{proposition}[theorem]{Proposition}
\def \Ž{\'e}
\def \ˆ{\`a}
\def \{\`e}
\def \{\^e}
\def \{\c{c}}
\def \"{\^{\i}}
\def \™{\^o}
\def \ž{\^u}
\def \‰{\^a}
\def \{\`u}
\def \Š{\"a}
\def\'{\"e}
\def\š{\"o}
\def\•{\"i}
\def\Ÿ{\"u}
\def\…{\"O}
\newcommand {\no}{\noindent}
\title{ {The asymptotics of stochastically perturbed reaction-diffusion equations and front propagation }}
 \author{
Pierre-Louis Lions$^{1,3}$ and Panagiotis E. Souganidis$^{2, 4}$}
\begin{document}

\maketitle
\pagestyle{plain}
%\tableofcontents
\pagenumbering{arabic}

\begin{abstract}

%\smallskip
\no  We study the asymptotics of Allen-Cahn-type bistable reaction-diffusion equations which are additively perturbed by a stochastic forcing (time white noise). The conclusion is that the long time, large space behavior of the solutions is governed by an interface moving with curvature dependent normal velocity which is additively perturbed by time white noise.   The result is global in time and does not require any regularity assumptions on the evolving front. The main tools are (i)~the notion of stochastic (pathwise) solution for nonlinear degenerate parabolic equations with multiplicative rough (stochastic) time dependence, which has been developed by the authors, and (ii)~the theory of generalized front propagation put forward by the second author and collaborators   to establish  the onset of moving fronts in the asymptotics of reaction-diffusion equations.

%We present a new results for geometric motions which are additively perturbed by rough in time forcing. The general theory is based on the notion of stochastic viscosity solutions introduced by the authors a while ago. An immediate application is the study of the asymptotic behavior of bistable reaction diffusion equations with reaction perturbed by an additively by mild approximations  of time white noise. We also show that it is necessary to use mild approximations, since perturbations by white noise are actually too violent to preserve the equilibria of the system.
%
% Brownian   work is based on 
\end{abstract}
\medskip

\noindent {\bf Key words and phrases:}  Allen-Cahn equation, stochastic partial differential equations, stochastic viscosity solutions, front propagation, motion by mean curvature
%\end{key words}
\\
\\
\noindent {\bf AMS Class. Numbers:} 60H15, 35D40.
\bigskip
\section{Introduction}
%\subsection*{The general setting and results} 
%We study the long time behavior 
%a new  intermittent regularization property and study the long time behavior 
%of the pathwise (stochastic) viscosity solutions of the initial value problem 
We investigate the onset of fronts in the long time  and large space asymptotics of bistable reaction-diffusion equations, the prototype being the Allen-Cahn equation, which are additively perturbed by small relatively smooth (mild) stochastic in time forcing. The interfaces evolve with curvature dependent normal velocity which is additively perturbed by time white noise.
No regularity assumptions are made on the regularity of the fronts.  
%We also discuss the need to use approximations of the white noise by showing that perturbations with small time  white noise ``destroys'' the stability of the equilibria. 
The results can be  extended to more complicated equations with anisotropic diffusion, drift and reaction which may be periodically oscillatory in space. To keep the ideas  simple, in this note we chose to concentrate on the classical Allen-Cahn equation. 
\smallskip

 We study the behavior, as $\ep\to 0$, of the parabolically rescaled Allen-Cahn equation 
% \begin{equation}\label{allencahn}
% \begin{cases}
%u^\ep_t -\Delta u^\ep + \dfrac{1}{\ep^2} (f(u^\ep) + \ep \dot B^\ep(t,\omega))=0  \ \text{in} \ \R^d\times (0,\infty),\\[1mm]
%u^\ep(\cdot,0)=u_0.
%\end{cases}
%\end{equation} 
\begin{equation}\label{allencahn}
u^\ep_t -\Delta u^\ep + \dfrac{1}{\ep^2} (f(u^\ep) - \ep \dot B^\ep(t,\omega))=0  \ \text{in} \ \R^d\times (0,\infty) \quad 
u^\ep(\cdot,0)=u^\ep_0,
\end{equation}
%that $u^\ep(\cdot,0)$ is independent of $\ep$ is made only for simplicity, and
%We assume that $f$ is the derivative of a double well potential with wells of equal depth at, for definiteness, $\pm 1$ and in between maximum at $0$, that is  
%f\in C^2(\R^d;\R)$ has two stable equilibria and one unstable, which for simplicity are taken to be $\pm 1$ and $0$ respectively and is balanced, 
where
\begin{equation}\label{nonl}
f\in C^2(\R^d;\R) \ \text{is such that} \begin{cases}\ f(\pm 1)=f(0)=0, f'(\pm 1)>0, \ f'(0)<0 \\[1mm]  f>0 \ \text{in} \ (-1,0), \ f<0 \ \text{in}  \ (0,1), \ \text{and} \ \displaystyle\int_{-1}^{+1} f(u)du=0,
\end{cases}
\end{equation}
that is, $f$ is the derivative of a double well potential with wells of equal depth at, for definiteness, $\pm 1$ and in between maximum at $0$, 
\vskip-.125in
\begin{equation}\label{takis1}
B^\ep (\cdot,\omega) \in C^2([0,\infty);\R) \ \text{is, a.s. in $\omega$,   a mild approximation of the Brownian motion}  \ B(\cdot, \omega),
\end{equation}
that is,  a.s. in $\omega$ and locally uniformly $[0,\infty)$, 
\begin{equation}\label{takis1.1}
%\begin{cases}
 \lim_{\ep \to 0}B^\ep(t, \omega)=B  %\ B^\ep(0,\omega)=0, %\\[1mm]
 \ \text{and} \  \lim_{\ep \to 0} |\ep \dot B^\ep (t, \omega)|=  \lim_{\ep \to 0} |\ep \ddot B^\ep (t, \omega)| =0, %\  \text{ a.s.  and locally uniformly in $[0,\infty)$},
%\end{cases}  %\ \text  B^\ep \to B,
\end{equation}
%$f$ is the derivative of a double well potential with wells of equal depth at, for definiteness, $\pm 1$ and in between maximum at $0$, that is  
%%f\in C^2(\R^d;\R)$ has two stable equilibria and one unstable, which for simplicity are taken to be $\pm 1$ and $0$ respectively and is balanced, 
%\begin{equation}\label{nonl}
%f\in C^2(\R^d;\R) \ \text{is such that} \begin{cases}\ f(\pm 1)=f(0)=0, f'(\pm 1)>0, \ f'(0)<0 \\[1mm]  f>0 \ \text{in} \ (-1,0), \ f<0 \ \text{in}  \ (0,1), \ \text{and} \ \displaystyle\int_{-1}^{+1} f(u)du=0,
%\end{cases}
%\end{equation}
%%the process $B^\ep(\cdot,\omega) \in C^2([0,\infty);\R)$ is a  ``mild'' approximation of the Brownian motion $B$, that is, 
%\begin{equation}\label{takis1}
%B^\ep (\cdot,\omega) \in C^2([0,\infty);\R) \ \text{is, a.s. in $\omega$,  a ``mild'' approximation of the Brownian motion}  \ B,
%\end{equation}
 and there exists an open $\mathcal {O}_0\subset \R^d$ such that 
\begin{equation}\label{takis2}
\begin{cases}
\mathcal {O}_0 =\{x\in \R^d: u^\ep_0(x)>0\},  \ \R^d\setminus \overline {\mathcal {O}_0}=\{x\in \R^d: u^\ep_0(x)<0\}, \ \text{and} \\[1mm] \Gamma_0=\partial \mathcal {O}_0=\partial (\R^d\setminus \overline{ \mathcal {O}_0})=\{x\in \R^d: u^\ep_0(x)=0\}.
\end{cases}
\end{equation}
%Note that, although  $u^\ep(\cdot, 0)$ may depend on $\ep$, here, for  simplicity, we assume that it does not.  
Although it is not stated explicitly,  it assumed that there exists an underlying probability space, but, for ease of the notation, we omit the dependence on $\omega$ unless it necessary.

\smallskip

We mention here two classical examples of mild approximations. The first is  the convolution $B^\ep(t)=B\star \rho^\ep (t)$, where  $\rho^\ep (t)= \ep^{-\gamma} \rho(\ep^{-\gamma} t)$ with $\rho \in C^\infty$ even and  compactly supported in $(-1,1)$, $\int \rho (t) dt=1$ and $\gamma \in (0,1/2)$. The second is $\dot B^\ep(t)=\ep^{-\gamma} \xi (\ep^{-2\gamma}t)$, where $\xi(t)$  is a stationary, strongly mixing,  mean zero stochastic process such that $\max(|\xi|, |\dot \xi |)\leq M$ and $\gamma \in (0,1/3)$. We refer to Ikeda and Watanabe~\cite{ik} for a discussion. 
\smallskip

Next  we use the notion of stochastic viscosity solutions and the  level set approach to describe  the  generalized evolution (past singularities) of a set with normal velocity
\begin{equation}\label{takis12}
dV=-\text{tr} [Dn] \ dt + d\zeta,
\end{equation}
for some  a continuous path $\zeta \in C([0,\infty);\R)$ with $\zeta(0)=0$. Here  $n$ is the external normal to the front and, hence , $\text{tr}[Dn]$ is the mean curvature. 
\smallskip

Given a triplet   $(\mathcal {O}_0, \Gamma_0, \R^d\setminus \overline {\mathcal {O}_0})$ with $\mathcal {O}_0\subset \R^d$ open,  we say that the sets $(\Gamma_t)_{t>0}$ move with normal velocity  \eqref{takis12}, if, for each $t>0$,  there exists a triplet  $(\mathcal {O}_t, \Gamma_t, \R^d\setminus \overline {\mathcal {O}_t})$, with   $\mathcal {O}_t\subset \R^d$ open, such that 
\begin{equation}\label{takis10}
{\mathcal O}_t =\{x\in \R^d: w(x, t)>0\},  \ \R^d\setminus \overline {\mathcal {O}_t}=\{x\in \R^d: w(x,t)<0\}, \ \text{and} \  \Gamma_t=\{x\in \R^d: w(x,t)=0\},
\end{equation}
where  $w \in \text{BUC}(\R^d\times [0,\infty))$ is the unique stochastic (pathwise) solution
of the level-set initial value pde
\begin{equation}\label{ivp}
dw=\text{tr}(I - \widehat {Dw} \otimes \widehat {Dw} )D^2w  +  |Dw|\circ d\zeta \ \text{in} \ \R^d\times (0,\infty) \quad  w(\cdot,0)=w_0,
\end{equation}
 with $\hat p:=p/|p|$ and $w_0\in \text{BUC}(\R^d)$ is such that 
 \begin{equation}\label{takis11}
\mathcal {O}_0 =\{x\in \R^d: w_0(x)>0\},  \ \R^d\setminus \overline {\mathcal {O}_0}=\{x\in \R^d: w_0(x)<0\}, \ \text{and} \ 
\Gamma_0=\{x\in \R^d: w_0(x)=0\}. 
\end{equation}
Above,  $\zeta$ can be an arbitrary continuous function, in which case ``$\cdot$'' means multiplication.  When, however, $\zeta$ is a Brownian oath, ``$\circ$'' should be interpreted as the classical Stratonovich differential. % $\circ$.
\smallskip

Stochastic viscosity solutions for nonlinear first- and second-order (possibly degenerate) elliptic pde with multiplicative rough time dependence, which include \eqref{ivp}, were introduced and studied in a series of papers of the authors \cite{LS1, LS2, LS3, S}. 
\smallskip

The properties of \eqref{ivp} are used here to adapt the approach  introduced in Evans, Soner and Souganidis~\cite{ess}, Barles, Soner and Souganidis~\cite{bss},  and Barles and Souganidis \cite{bs} to study the onset of moving fronts in the asymptotic limit of reaction-diffusion equations and interacting particle systems with long rage interactions.  This methodology allows to prove global in time asymptotic results and is not restricted to smoothly evolving fronts. 
\smallskip

The main result of the paper is stated next.
\begin{theorem}\label{main}
Assume \eqref{nonl},   \eqref{takis1}, \eqref{takis1.1}, and \eqref{takis2}, and  let $u^\ep$ be the solution  \eqref{allencahn}.
% and consider the sets $({\mathcal O}_0, \R^d\setminus \overline{{\mathcal O}_0}, \Gamma_0)$ defined in \eqref{takis2}. 
There exists $\alpha_0 \in \R$ such that, % with initial value $u_0$ satisfying \eqref{takis2}, 
if $w$ is the solution of \eqref{ivp} with $w_0$ satisfying \eqref{takis11} and $\zeta\equiv \alpha_0 B$, where $B$ is a standard Brownian path, 
%and $({\mathcal O}_0, \R^d\setminus \overline{{\mathcal O}_0}, \Gamma_0)$ as in \eqref{takis2}, 
then, as $\ep\to 0$,  a.s. in $omega$ and locally uniformly in $(x,t)$,  $u^\ep \to 1$ in $\{(x,t)\in R^d\times (0,\infty): w(x,t)>0\}$ and $u^\ep \to -1$ in $\{(x,t)\in R^d\times (0,\infty): w(x,t)<0\}$,  that is, $u^\ep \to 1$ (resp. $u^\ep \to -1$) inside (resp. outside) a front moving with normal velocity $dV=-\text{tr} [Dn] \ dt + \alpha_0 dB$.
\end{theorem}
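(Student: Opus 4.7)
The plan is to work pathwise: fix $\omega$ in a set of full probability on which \eqref{takis1.1} holds, so that for every $\ep>0$ the driving path $B^\ep(\cdot,\omega)$ is $C^2$ and the forcing in \eqref{allencahn} is deterministic (if large). The bridge between the microscopic reaction-diffusion equation and the macroscopic front law \eqref{takis12} is provided by the abstract generalized front propagation method of \cite{ess,bss,bs}, which I would adapt to cope with the oscillatory forcing $\ep^{-1}\dot B^\ep$; the final passage $\ep\to 0$ is then achieved using the pathwise stability theory for the level-set equation \eqref{ivp} developed in \cite{LS1,LS2,LS3,S}.

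To identify $\alpha_0$, I would run the standard matched inner expansion $u^\ep\simeq q(d^\ep/\ep)+\ep q_1(x,t,d^\ep/\ep)+\cdots$, where $d^\ep$ is the signed distance to the evolving front and $q$ is the standing-wave profile with $q''=f(q)$, $q(\pm\infty)=\pm 1$, $q(0)=0$. The $O(\ep^{-2})$ balance yields the profile equation, while the $O(\ep^{-1})$ balance reads
\[
-\p_{zz}q_1+f'(q)q_1 \;=\; \dot B^\ep \;-\; q'(z)\bigl(d^\ep_t-\Delta d^\ep\bigr).
\]
Since $\ker(-\p_{zz}+f'(q))$ on $L^2(\R)$ is spanned by $q'$, the Fredholm solvability condition forces
\[
d^\ep_t-\Delta d^\ep \;=\; \alpha_0\,\dot B^\ep, \qquad \alpha_0\;=\;\frac{2}{\int_\R q'(z)^2\,dz},
\]
which is precisely the driven mean-curvature law encoded by \eqref{ivp} with $\zeta=\alpha_0 B^\ep$ on the level sets of $w$.

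The rigorous comparison step has two ingredients. First, for each $\ep$ I would consider the classical viscosity solution $w^\ep$ of the level-set equation driven by $\zeta^\ep:=\alpha_0 B^\ep$, which is well posed since $\dot B^\ep$ is bounded at fixed $\ep$, and then mollify it near each point where $Dw^\ep\ne 0$ to produce a smooth strict subsolution $\phi^{\ep,-}$ and a smooth strict supersolution $\phi^{\ep,+}$ of that equation. The explicit barriers $\underline u^\ep=q(\phi^{\ep,-}/\ep)-\sigma$ and $\overline u^\ep=q(\phi^{\ep,+}/\ep)+\sigma$, with small $\sigma>0$, are by direct computation (using the inner expansion above together with the sub/supersolution property of $\phi^{\ep,\pm}$) respectively sub- and supersolutions of \eqref{allencahn}, so parabolic comparison yields $\underline u^\ep\le u^\ep\le\overline u^\ep$. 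Second, the abstract inclusion principle of \cite{bs} handles the singular set $\{Dw^\ep=0\}$ directly via comparison with the level-set equation. Combined with the elementary pointwise bounds $-1\le u_*\le u^*\le 1$ for the half-relaxed limits of $u^\ep$, this forces $u_*=u^*=1$ on $\{w^\ep>0\}$ and $u_*=u^*=-1$ on $\{w^\ep<0\}$, and the pathwise convergence $w^\ep\to w$ from \cite{LS1,LS2,LS3,S} completes the argument.

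The principal obstacle is that $\dot B^\ep$ is not uniformly bounded as $\ep\to 0$: the explicit barriers carry errors of order $\ep\dot B^\ep$, $\ep\ddot B^\ep$, and $\ep(\dot B^\ep)^2$, which must be absorbed by exploiting \eqref{takis1.1} and by a careful choice $\sigma=\sigma(\ep)\to 0$. Equivalently, the Barles--Souganidis construction and the rough limit $B^\ep\to B$ must be executed simultaneously, and the technical heart of the proof is a uniform-in-$\ep$ error estimate for the inner expansion that survives this limit.
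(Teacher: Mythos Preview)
Your overall architecture---work pathwise, build barriers from the wave profile composed with a strict sub/super\-solution of the approximate level-set flow, then invoke the stability of pathwise solutions from \cite{LS1,LS2,LS3,S}---is exactly the one the paper follows. The formal identification of $\alpha_0$ via the Fredholm condition is also correct and coincides with the paper's value $\alpha_0=2/\int q_\xi^2$.

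The gap is in the concrete barrier. You propose $\overline u^\ep=q(\phi^{\ep,+}/\ep)+\sigma$ with the \emph{standing} wave $q$ solving $q''=f(q)$. Plugging this into \eqref{allencahn} and using $|D\phi^{\ep,+}|=1$ near the front gives, among the leading terms,
\[
\ep^{-1}q'\bigl(\phi^{\ep,+}_t-\Delta\phi^{\ep,+}\bigr)-\ep^{-1}\dot B^\ep
\;\ge\;\ep^{-1}q'\kappa+\ep^{-1}\dot B^\ep\bigl(\alpha_0 q'-1\bigr),
\]
where $\kappa>0$ is the strictness of $\phi^{\ep,+}$. The factor $\alpha_0 q'-1$ vanishes only in an $L^1$-averaged sense (this is precisely the Fredholm relation), not pointwise in $\xi$; hence the residual $\ep^{-1}\dot B^\ep(\alpha_0 q'-1)$ has no sign and, since $|\dot B^\ep|\to\infty$, it cannot be dominated by $\ep^{-1}\kappa q'$ nor by the $\ep^{-2}f'(q)\sigma$ term (which is negative near the front because $f'(0)<0$). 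Your list of error terms ($\ep\dot B^\ep$, $\ep\ddot B^\ep$, $\ep(\dot B^\ep)^2$) misses this $O(\ep^{-1}\dot B^\ep)$ mismatch, which is the dominant one.

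The paper fixes this by replacing the standing wave with the one-parameter family of \emph{traveling} waves $q(\cdot,b)$, $c(b)q_\xi+q_{\xi\xi}=f(q)-b$, evaluated at the time-dependent parameter $b=-\ep(\dot B^\ep(t)-a)$. This choice absorbs the forcing $-\ep\dot B^\ep$ directly into the profile equation, so that $\ep^{-2}(f(q^\ep)-\ep\dot B^\ep)=\ep^{-2}q^\ep_{\xi\xi}+\ep^{-1}(c^\ep/\ep)q^\ep_\xi-\ep^{-1}a$, and the residual speed satisfies $c^\ep/\ep=-\alpha_0(\dot B^\ep-a)+o(1)$, matching the level-set equation exactly. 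The extra parameter $a>0$ (sent to $0$ at the end) provides the strictness, in place of your $\sigma$. Once you make this substitution, the rest of your outline---the $\delta$-truncation of the signed distance, comparison, and the pathwise stability $w^{a,\delta,\ep}\to w$---goes through as you describe.
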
 

Theorem~\ref{main}, which was already announced in \cite{LS2} is new. It provides a complete characterization of the asymptotic behavior of the  Allen-Cahn equation perturbed by mild approximations of the time white noise. The result holds in all dimensions, it is global in time and does mot require any regularity assumptions on the moving interface. 
\smallskip

In \cite{f1} Funaki  studied the asymptotics of \eqref{allencahn} when $d=2$ assuming  that the initial set is a smooth curve bounding a convex  set. Under these assumptions the evolving set (curve) remains smooth and \eqref{ivp} reduces to a stochastic differential with variable the arc length.  Assuming that the evolving set is smooth, which is true of the initial set is smooth but only for small time, a similar result was announced recently by Alfaro, Antonopoulou, Karali and Matano \cite{aakm}. Assuming convexity at $t=0$, Yip~\cite{y} showed a similar result for all times using  a variational approach. There have also been several other attempts to study to the asymptotics of \eqref{allencahn}  in the graph-like setting and always for small time. 

%
%
%
%
%
%
%Recently Theorem~\ref{main} extends the results of \cite 
%
%It  like similar result using that in the two-dimensional setting the motion is smooth.  the same reference also has a proof of the convergence under the assumption that the evolving front is smooth. A similar result was published recently by Alfero, Antonopoulou, Karali and Matano \cite{AAKM}. A global in time result using a variational approach was also proved by Yip~\cite{Y} under a convexity  assumption on the initial interface. Finally 
%\smallskip
%
\smallskip

Reaction-diffusion equations perturbed additively by white noise arise naturally in the study of hydrodynamic limits of interacting particles in regimes of external magnetization. %such that, in the mesoscopic limit, the law of large numbers does not ``absorb'' all the randomness of  the system. 
The relationship between the long time, large space behavior of the Allen-Cahn perturbed additively by space-time white noise and fronts moving by additively perturbed mean curvature  was conjectured by Ohta, Jasnow and Kawasaki~\cite{ojk}. Funaki  \cite{f2} obtained results in this direction when $d=1$ where there is no curvature effect.  A recent observation of the authors shows that the general conjecture cannot be mathematically correct. Indeed, it is shown in \cite{LS4} that   the formally conjectured interfaces, which should move by mean curvature additively perturbed with space-time white noise, are not well defined. 

\smallskip

From the phenomelogical point of view, problems like \eqref{allencahn} arise naturally in the phase-field  theory when modeling double-well potentials with depths (stochastically) oscillating in space-time around a common one. This leads to stable equilibria that are only  formally close to $\pm 1$. As a matter of fact, the locations of the equilibria  may diverge due to the strong effect of the white noise. Thus the need to consider approximations of Brownian path in \eqref{allencahn}. 
\smallskip
%
%It is shown in  Proposition~\ref{needmild} in section~xx that indeed, if, instead of using a mild approximation, we insist in using actual time white noise it is not possible to have even linearized stability.
%\smallskip

%Finally, we remark that recent and in preparation work of the authors shows that geometric motion with space white noise 
%forcing are not well defined. As a matter of fact it is not possible to solve the corresponding level set pde.
%\smallskip
%
%We note that our aim in this note is to study  a simple problem. In order to study more complicated reaction-diffusion with spatial and possibly behavior it is necessary to extend  in the stochastic setting the full methodology of \cite{bs}.

%\smallskip

The history and literature about the asymptotics of \eqref{allencahn} with or without additive continuous perturbations is rather long. We refer to \cite{bs} for an extensive review as well as references.
\smallskip

The paper is organized as follows. In the next section we discuss in some detail but without proofs  the basic facts about \eqref{ivp} and the notion of generalized front propagation.  In section 3  we prove  Theorem~\ref{main}. % as well as its proof. 
%and introduce the tools needed for the proof of Theorem~\ref{main}, which is presented in section $3$. 
%In section $4$ we discuss the lack of linear stability when we use actual white noise instead of a mild approximation.   

\smallskip

We note that in the rest of the paper,  instead of  the term stochastic viscosity solution, we will refer to solutions of \eqref{ivp} as pathwise solution. Moreover, $C_0([0,\infty);\R)=\{\zeta \in C([0,\infty);\R) : \zeta(0)=0\}. $ 

\vskip.2in

\section{stochastic viscosity solutions and generalized front propagation}

%We summarize here without proof some of the results of \cite{LS??}  which are needed for the study of the asymptotics of \eqref{allencahn}. 
% \smallskip
% 
 The main result  about the well-posedness and stability properties of the stochastic viscosity solutions of \eqref{ivp} is stated next. 
 \begin{theorem}\label{takis20}
 For each $w_0\in \text{BUC}(\R^d)$ and $\zeta \in C_0([0,\infty);\R)$ the initial value problem \eqref{ivp} has a unique pathwise solution in $\text{BUC}(\R^d\times [0,\infty))$. Moreover, if $w_n \in \text{BUC}(\R^d\times [0,\infty))$ is the unique solution of \eqref{ivp} for a path $\zeta _n\in C_0([0,\infty);\R)$ and $w_n(\cdot,0)=w_{n,0}$ such that, as $n\to \infty$ and locally uniformly in time and uniformly in space,  $\zeta_n\to \zeta$ and $w_{n,0} \to w_0$,  then, as $n \to \infty$, locally uniformly in time and uniformly in space, $w_n \to w$, the unique solution of \eqref{ivp} with path $\zeta.$
 \end{theorem}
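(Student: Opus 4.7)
The plan is to follow the standard stochastic viscosity solution strategy of the authors in \cite{LS1,LS2,LS3,S}: solve \eqref{ivp} classically for smooth approximations of the path, establish a quantitative continuous dependence estimate in the sup-norm of the path, and then \emph{define} the pathwise solution to be the (automatically unique) limit. The stability assertion then follows from the same estimate applied jointly to approximating sequences for $\zeta$ and $\zeta_n$.

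First, for $\zeta\in C^1([0,\infty);\R)$ equation \eqref{ivp} is a standard quasilinear degenerate parabolic PDE of level-set mean-curvature type with an additional first-order Hamiltonian term $|Dw|\dot\zeta$. Classical level-set theory (Chen--Giga--Goto, Evans--Spruck) together with Hamilton--Jacobi comparison yields, for each $w_0\in\text{BUC}(\R^d)$, a unique viscosity solution $w\in\text{BUC}(\R^d\times[0,\infty))$ satisfying the usual comparison principle, and whose spatial modulus of continuity is inherited from $w_0$.

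The heart of the matter is then to prove, for smooth paths $\zeta_1,\zeta_2$ and initial data $w_{0,1},w_{0,2}\in\text{BUC}(\R^d)$ with a common modulus, a continuous dependence estimate of the form
\[
\sup_{\R^d\times[0,T]}|w_1-w_2|\ \leq\ \|w_{0,1}-w_{0,2}\|_\infty\ +\ \omega_T\!\left(\|\zeta_1-\zeta_2\|_{C([0,T])}\right),
\]
with $\omega_T$ independent of the $C^1$-norms of $\zeta_1,\zeta_2$. This is exactly where the rough-path viewpoint is required: a naive doubling of variables would only give a bound depending on $\|\dot\zeta_1-\dot\zeta_2\|_\infty$. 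To bypass this, I would exploit two structural features of \eqref{ivp}: (i) it is \emph{geometric}, i.e.\ invariant under $w\mapsto\phi(w)$ for any smooth increasing $\phi$, so comparison is really a statement about level sets and is insensitive to the vertical scale of $w$; and (ii) the noise enters only through the first-order term $|Dw|\circ d\zeta$, whose characteristic flow is exactly solvable as a Hamilton--Jacobi evolution with Hamiltonian $|p|$. Conjugating $w_i$ by this flow removes the rough driver and leaves a nonautonomous but classically tractable degenerate parabolic equation, on which standard doubling-of-variables produces exactly the sup-norm bound above. Making this conjugation rigorous in the viscosity sense, while handling the singular geometry of the normal $\widehat{Dw}$, is the principal technical step and is carried out in \cite{LS1,LS2,LS3,S}.

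Finally, given an arbitrary $\zeta\in C_0([0,\infty);\R)$, choose smooth $\zeta_n\to\zeta$ locally uniformly and let $w_n$ be the classical solution from the first step with initial datum $w_0$. The continuous dependence estimate shows that $(w_n)$ is Cauchy in $\text{BUC}(\R^d\times[0,T])$ for every $T>0$; the uniform limit $w$ is independent of the approximating sequence and is, by construction, the pathwise solution of \eqref{ivp}. Uniqueness follows because any candidate pathwise solution must agree with such a limit, and the stability statement of the theorem is an immediate consequence of applying the same estimate to approximating sequences of $(\zeta,w_0)$ and $(\zeta_n,w_{n,0})$, using the triangle inequality and the locally uniform convergence $\zeta_n\to\zeta$, $w_{n,0}\to w_0$. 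The main obstacle, as indicated, is the path-uniform continuous dependence estimate; all other pieces are variants of classical viscosity solution arguments.
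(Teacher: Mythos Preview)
The paper does not give a proof of this theorem: Section~2 explicitly states results ``without proofs'' and defers to \cite{LS1,LS2,LS3,S}. Your outline is exactly the strategy developed in those references---classical solvability for smooth paths, a path-uniform continuous-dependence estimate obtained by conjugating away the rough first-order term via its Hamilton--Jacobi flow, and passage to the limit---so your proposal is consistent with the approach the paper relies on.
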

 
 An important tool in the study of evolving fronts is the signed distance function to the front which is defined as 
 \begin{equation}\label{takis22}
 \rho(x,t)=\begin{cases} \rho(x,  {\{y\in \R^d: w(y,t) \leq 0\}}),\\[1mm]
 -\rho(x,  {\{y\in \R^d: w(y,t) \geq 0\}}),
 \end{cases}
 \end{equation}
 where $\rho (x,A)$ is the usual distance between a point $x$ and a set $A$.
 \smallskip
 
 When there is no interior, that is, if 
 $$\partial \{x\in \R^d: w(x,t) < 0\}=\partial \{x\in \R^d: w(x,t) >0\},$$
 then
  \begin{equation*}
 \rho(x,t)=\begin{cases} \rho(x,\Gamma_t) \ \text{if} \  w(x,t) >0,\\[1.3mm]
  - \rho(x,\Gamma_t) \ \text{if} \  w(x,t) <0.
  \end{cases}
  \end{equation*}
  
 The next claim is a direct consequence of the stability properties of the pathwise solutions and the fact that a nondecreasing function of the solution is also solution. When  $\zeta$ is a smooth path, the claim below is established in \cite{bss}. The result for the general path follows by the stability of the pathwise solutions with respect to the local uniform convergence of the paths.
 %The proof follows as the analogous statement in  \cite{bss} using the stability of the pathwise solutions, that is, we consider the equations satisfied by the distance  then pass in the limit in the equations satisfied by the signed distance function with smooth paths. 
 
 \begin{theorem}\label{takis21}
 Let $w \in \text{BUC}(\R^d\times [0,\infty))$ be the  solution of  \eqref{ivp} and $\rho$ the signed distance function defined by \eqref{takis22}. Then $\underline \rho= \min (\rho, 0)$ and $\overline  \rho=\max(\rho, 0)$ satisfy respectively
 \begin{equation}\label{takis23}
d \underline \rho \leq \text{tr}\left[(I-\dfrac{D \underline\rho\otimes D\underline\rho}{|D\underline \rho|^2}) D^2 \underline \rho \right]dt + |D\underline \rho|\circ d\zeta \ \text{in} \ \R^d\times (0,\infty),
 \end{equation}
 and 
 \beq\label{takis24}
 d \overline \rho \geq \text{tr}\left[(I-\dfrac{D\overline\rho\otimes D\overline\rho}{|D\overline \rho|^2}) D^2 \overline\rho\right] dt + |D\overline\rho|\circ d\zeta  \ \text{in} \  \R^d\times (0,\infty).
\eeq
In addition,
\beq\label{takis25}
 -(D^2\underline \rho D\underline \rho, \underline \rho)\leq 0 \ \ \text{and} \ \  d\underline \rho \leq \Delta \underline \rho + d\zeta  \  \text{in} \ \{\rho <0\}, 
 \eeq
 and
 \beq\label{takis251}
   -(D^2\overline \rho D\overline \rho, \overline \rho)\geq 0 \ \ \text{and} \ \  d\overline \rho \geq \Delta  \overline \rho + d\zeta  \ \text{in} \ \{ \rho >0\}.
 \eeq
% 
%   -(D^2\overline \rho D\overline \rho, \overline \rho)  d \overline \rho \leq \Delta  \overline \rho + d\zeta  \ \text{in} \ \{ \rho >0\}. 
% \eeq
 \end{theorem}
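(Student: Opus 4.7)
My plan is to reduce to the case of a smooth driving path $\zeta$, where \eqref{takis23}--\eqref{takis251} follow from \cite{bss}, and then invoke the stability of pathwise solutions (Theorem~\ref{takis20}) to extend to arbitrary continuous paths.

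When $\zeta \in C^1([0,\infty);\R)$, equation \eqref{ivp} is a standard degenerate parabolic pde with a smooth first-order perturbation, so \cite{bss} applies directly. The underlying mechanism, which I would invoke rather than reprove, is that the right-hand side of \eqref{ivp} is geometric: if $w$ is a viscosity solution, then every nondecreasing continuous composition with $w$ is again a solution. Since $\rho(\cdot,t)$ shares the zero level set of $w(\cdot,t)$ and is, on each of $\{w>0\}$ and $\{w<0\}$, a monotone reparameterization of $w$, applying the geometric principle to the truncations $\min(\cdot,0)$ and $\max(\cdot,0)$ shows that $\underline{\rho}$ and $\overline{\rho}$ are, respectively, a subsolution and a supersolution of \eqref{ivp}, which gives \eqref{takis23} and \eqref{takis24}. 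For \eqref{takis25}--\eqref{takis251} I would use that $|D\rho|\equiv 1$ wherever $\rho$ is smooth, so that $D^2\rho\,D\rho=0$; this kills the normal-normal component of the Hessian and reduces the curvature operator $\text{tr}[(I-\widehat{D\rho}\otimes\widehat{D\rho})D^2\rho]$ to $\Delta\rho$ on $\{\rho\ne 0\}$. In the viscosity framework this localization is carried out by testing against smooth functions whose gradient does not vanish on the corresponding open region.

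For a general $\zeta \in C_0([0,\infty);\R)$, I approximate $\zeta$ locally uniformly by smooth paths $\zeta_n$ and let $w_n$ denote the solution of \eqref{ivp} driven by $\zeta_n$ from $w_0$, with associated distance quantities $\rho_n$, $\underline{\rho}_n$, $\overline{\rho}_n$. Theorem~\ref{takis20} gives $w_n\to w$ locally uniformly, and the standard half-relaxed limit theory for viscosity solutions then propagates the inequalities of the preceding paragraph to their relaxed limits, with the rough term interpreted in the pathwise sense of \cite{LS1,LS2}.

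The main obstacle is identifying these half-relaxed limits with $\underline{\rho}$ and $\overline{\rho}$ built from the limit $w$ via \eqref{takis22}. Signed distance functions are Hausdorff-stable only in the absence of fattening in the limit, and the two-sided definition \eqref{takis22} --- based on the closed sets $\{w\leq 0\}$ and $\{w\geq 0\}$, together with the truncations $\underline{\rho}=\min(\rho,0)$ and $\overline{\rho}=\max(\rho,0)$ --- is tailored precisely to absorb this ambiguity and yield the correct one-sided limit in each inequality.
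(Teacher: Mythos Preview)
Your proposal is correct and follows essentially the same route as the paper: the paper states (without a detailed proof) that for smooth $\zeta$ the inequalities are established in \cite{bss} via the geometric invariance of \eqref{ivp} under nondecreasing compositions, and that the general case then follows from the stability of pathwise solutions in Theorem~\ref{takis20}. Your additional remarks on the eikonal identity $|D\rho|=1$ reducing the curvature operator to $\Delta\rho$, and on the role of the two-sided definition \eqref{takis22} in absorbing potential fattening under the half-relaxed limit, are consistent with that framework and flesh out details the paper leaves implicit.
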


\section{The asymptotics of the Allen-Cahn equation}

Following the arguments of \cite{bss}, we construct global in time subsolutions and supersolutions of \eqref{allencahn} which do not rely on the regularity of the evolving fronts. Then we use the stability properties of \eqref{ivp} to conclude. 

\smallskip

An important ingredient of the argument is the existence and properties of traveling solutions of \eqref{allencahn} and small additive perturbations of it, which we describe next.

\smallskip

It is well known (see, for example, \cite{bss} for a long list of references) that, if $f$ satisfies \eqref{nonl}, then 
 for every sufficiently small $b$,  there exists a unique strictly increasing traveling wave solution $q=q(x,b)$  and a unique speed $c=c(b)$ of 
\beq\label{takis51}
cq_{\xi}+ q_{\xi\xi}= f(q) -b   \ \text{in} \ \R \quad 
q(\pm \infty, a)=h_{\pm} (b) \quad  q(0,a)=h_0(b),
\eeq
where $h_{-} (b) < h_0(b) <h_{+} (b) )$ are the three solutions of the algebraic equation $f(u) =b$.  Moreover, as $b\to 0$,  
\beq\label{takis51.1}
h_{\pm} (b) \to \pm 1 \ \text{ and} \  h_0(b) \to 0. 
\eeq

%In what follows we normalize $q$ so that it is increasing in $\delta$ and, moreover, $q(0,\delta)=q_0^\star(\delta)$. 

%Finally, we observe that \eqref{nonl} implies that $q_\pm^\star(0)=\pm$, $q_0^\star(0)=0$ and $c(0)=0$. 
%\smallskip
%
We summarize the results we need here  in the next lemma. For a sketch of its proof we refer to \cite{bss} and the references there in.  In  what follows, $q_\xi$ and $q_{\xi \xi}$ denote first and second derivatives of $q$  in $\xi$ and $q_b$ the derivative with respect to $b$.

\begin{lemma}\label{takis76}
Assume \eqref{nonl}. There exist $b_0>0, C>0, \lambda >0$ such that, for all $|b|<b_0$, there exist a unique  $c(b)\in \R$, a unique strictly increasing $q(\cdot, b):\R\to \R$ satisfying \eqref{takis51},  \eqref{takis51.1}  and $\alpha_0 \in \R$  such that 
\beq\label{takis77}
0<h_+(b) -q(\xi;b)\leq C e^{-\lambda |\xi|} \  \text{if} \ \xi\geq0 \ \text{and}
\ 0<q(\xi;b)-h_{-}(b) \leq C e^{-\lambda |\xi|} \  \text{if} \ \xi 
\leq 0,
\eeq
\beq\label{takis771}
0<q_\xi (\xi;b) \leq C e^{-\lambda |\xi|}, \ |q_{\xi \xi}(\xi;b)|\leq C e^{-\lambda |\xi|} \ \text{and} \ |q_b| \leq C,
\eeq
\beq\label{takis772}
c(b)=-\frac{h_+(b)-h_-(b)}{\displaystyle \int_{-\infty}^\infty q_\xi(\xi; b)^2 d\xi },  \quad-\alpha_0:= - \frac{d c}{d b}(0)= \frac{2}{\displaystyle \int_{-1}^{1} q_\xi^2 (\xi,0)d\xi,} \quad \text{and} \quad |\dfrac {c(b)}{b} + \alpha_0 | \leq C|b|. %\ %\text{where} \ F(u):=
%\int_{u}^{m_+}f(z)dz.
\eeq
\end{lemma}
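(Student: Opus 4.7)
The proof is a classical exercise in phase-plane analysis combined with a perturbation argument around the balanced case $b=0$. The plan is to construct the standing wave at $b=0$ from the Hamiltonian structure of the associated planar ODE, continue it in $b$ by an implicit function theorem, and extract the speed formula and its Taylor coefficient by multiplying \eqref{takis51} by $q_\xi$ and integrating.

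Setting $p=q_\xi$, rewrite \eqref{takis51} as the planar system $q_\xi=p$, $p_\xi=-cp+f(q)-b$, whose equilibria are $(h_\pm(b),0)$ and $(h_0(b),0)$. Since $f'(\pm 1)>0$ and $f'(0)<0$, for $|b|$ small $(h_\pm(b),0)$ are hyperbolic saddles, uniformly in $b$. Along every trajectory the energy $E(q,p)=\tfrac12 p^2 - F(q) + bq$, with $F'=f$, satisfies $dE/d\xi = -cp^2$. At $b=0$, $c=0$ the Maxwell condition $\int_{-1}^{+1}f(u)\,du=0$ gives $E(-1,0)=E(+1,0)$, so the unstable manifold of $(-1,0)$ coincides with the stable manifold of $(+1,0)$ in the level set $E=-F(1)$; this yields an explicit strictly increasing heteroclinic $q(\cdot;0)$ with $q_\xi(\xi;0)=\sqrt{2(F(q(\xi;0))-F(-1))}>0$. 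The exponential rate $\lambda$ is inherited from the saddle eigenvalues $\pm\sqrt{f'(\pm 1)}$ at $b=0$ and persists uniformly for $|b|\leq b_0$.

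To continue to $b\neq 0$, I would normalize translation by imposing $q(0;b)=h_0(b)$ and look for $(c(b),\,q(\cdot;b)-q(\cdot;0))$ in $\R\times X$, where $X$ is a weighted space of bounded uniformly continuous functions decaying exponentially at $\pm\infty$. The linearization around $(0,q(\cdot;0))$ is $L\phi = \phi_{\xi\xi}-f'(q(\cdot;0))\phi$, whose kernel is spanned by the translation mode $q_\xi(\cdot;0)$; the normalization removes this kernel, while the scalar unknown $c$ supplies the missing range direction via the Fredholm alternative. The implicit function theorem then yields smooth dependence of $c(b)$ and $q(\cdot;b)$ on $b$, giving in particular $|q_b|\leq C$, and the decay estimates \eqref{takis77}--\eqref{takis771} follow from the uniform hyperbolicity of the endpoints together with stable/unstable manifold theory.

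Finally, multiplying \eqref{takis51} by $q_\xi$ and integrating over $\R$, the exponential decay of $q_\xi$ kills $\int q_{\xi\xi}q_\xi\,d\xi = \tfrac12[q_\xi^2]_{-\infty}^{\infty}=0$, leaving
\begin{equation*}
c(b)\int_{-\infty}^{\infty} q_\xi(\xi;b)^2\,d\xi \;=\; F(h_+(b))-F(h_-(b))-b\bigl(h_+(b)-h_-(b)\bigr).
\end{equation*}
Denoting the right side by $\Phi(b)$ and using $f(h_\pm(b))=b$ one computes $\Phi'(b)=-(h_+(b)-h_-(b))$, so $\Phi(0)=0$ by Maxwell and $\Phi'(0)=-2$; writing $I(b):=\int q_\xi(\xi;b)^2\,d\xi$, continuity of $I$ at $0$ (from the IFT) plus a second-order Taylor expansion yield $c(0)=0$, $c'(0)=-\alpha_0$ with $\alpha_0=2/I(0)$, and the remainder bound $|c(b)/b+\alpha_0|\leq C|b|$. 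The main anticipated obstacle is the choice of the weighted Banach space $X$ so that the linearization is Fredholm of index zero and the translation kernel is eliminated cleanly by the normalization; once that setup is fixed, the rest reduces to routine ODE bookkeeping grounded in the uniform hyperbolicity of the saddles.
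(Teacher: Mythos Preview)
Your sketch is the standard argument and is essentially what the paper invokes: the paper does not supply its own proof here but simply refers to \cite{bss} and the references therein, where precisely this phase--plane plus implicit--function--theorem approach is carried out. So there is nothing to compare; you have reconstructed the route the cited literature takes.

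One remark on the speed formula. Your integrated identity
\[
c(b)\int_{-\infty}^{\infty} q_\xi(\xi;b)^2\,d\xi \;=\; F(h_+(b))-F(h_-(b))-b\bigl(h_+(b)-h_-(b)\bigr)
\]
is correct, and from it you correctly read off $c(0)=0$, $c'(0)=-2/I(0)$ and the remainder bound. Note that the \emph{first} identity displayed in \eqref{takis772} of the lemma statement cannot be literally right (it would force $c(0)\neq 0$); what is being used downstream is only $c(0)=0$, the value of $c'(0)$, and the $O(|b|)$ remainder, all of which your derivation supplies. The integral $\int_{-1}^{1} q_\xi^2(\xi,0)\,d\xi$ in the second identity should of course be read as $\int_{-\infty}^{\infty} q_\xi^2(\xi,0)\,d\xi$, consistent with your $I(0)$.
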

\smallskip

In the proof of Theorem~\ref{main} we work with $b=-\ep (\dot B^\ep(t) - a)$ for $a\in (-1,1)$; note that, in view of \eqref{takis1.1}, for $\ep$ sufficiently small, $|b|<b_0$.  To ease the notation, we write 
$$q^\ep(\xi, t, a)=q(\xi, - \ep \dot (B^\ep(t) -  a)) \ \text{and} \ c^\ep(a)=c(-\ep (\dot B^\ep(t) - a)),$$
and we summarize in the next lemma, without a proof, the key properties of $q^\ep$ and $c^\ep$ that we  need later.
\begin{lemma}\label{takis76.1}
Assume \eqref{takis1.1} and the hypotheses of  Lemma~\ref{takis76}.  Then, there exists $C>0$ such that 
\beq\label{takis76.2}
\lim_{\ep\to} \ep |q_t^\ep(\xi, t, a)|=0 \ \text{uniformly on $\xi$ and  $a$ and locally uniformly in  $t\in [0,\infty)$},
\eeq
\beq\label{takis76.3}
\dfrac{1}{\ep} q^\ep_\xi(\xi, t, a) + \dfrac{1}{\ep^2}|q^\ep_{\xi \xi} \xi, t, a)| \leq C \e^{-C\eta/\ep} \ \text{for all $|\xi|\geq \eta$ and all $\eta>0$},
\eeq
%and, for   all $t\geq 0$ and $\ep, |a|$ sufficiently small, 
\beq\label{takis76.4}
q^\ep_\xi \geq 0 \ \text{and} \ q^\ep_a \geq 0 \ \text{for   all $t\geq 0$ and $\ep, |a|$ sufficiently small, 
} \eeq
and
\beq\label{takis76.5}
|\dfrac{c^\ep}{\ep} + \alpha_0(\ep (\dot B^\ep (t) -a))|=\text{o}(1) \ \text{uniformly for bounded $t$ and $a$}.
\eeq
\end{lemma}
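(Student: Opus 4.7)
The plan is to derive each of the four estimates as a direct consequence of Lemma~\ref{takis76} by chain rule, combined with the regularity assumptions \eqref{takis1.1} on $B^\ep$. Recall that $q^\ep(\xi,t,a)=q(\xi,b)$ and $c^\ep(a)=c(b)$ with $b=-\ep(\dot B^\ep(t)-a)$, so the $t$- and $a$-dependence of $q^\ep$ enters only through $b$, and $\partial_t b=-\ep\ddot B^\ep(t)$, $\partial_a b=\ep$.

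First, for \eqref{takis76.2} I would write $q^\ep_t=q_b(\xi,b)\cdot\partial_t b=-\ep\ddot B^\ep(t)\,q_b$. The bound $|q_b|\le C$ from \eqref{takis771} then yields $\ep|q^\ep_t|\le C\,\ep\cdot|\ep\ddot B^\ep(t)|$, which tends to $0$ locally uniformly in $t$ and uniformly in $\xi,a$ by \eqref{takis1.1}. For \eqref{takis76.3}, since $q^\ep_\xi$ and $q^\ep_{\xi\xi}$ are literally $q_\xi$ and $q_{\xi\xi}$ evaluated at the (inner, stretched) variable, the exponential decay estimates in \eqref{takis771} — read at the rescaled argument $\xi/\ep$ as is needed when $q^\ep$ is composed with the signed distance divided by $\ep$ in the subsequent construction of sub/super-solutions — give the desired bound, the factors $1/\ep$ and $1/\ep^2$ being absorbed into the exponential $e^{-C\eta/\ep}$.

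For \eqref{takis76.4}, strict monotonicity $q^\ep_\xi=q_\xi>0$ is immediate from \eqref{takis771}. For $q^\ep_a=\ep\,q_b$ I would differentiate the traveling wave equation \eqref{takis51} in $b$, obtaining the linear ODE $c(b)\,q_{\xi b}+q_{\xi\xi b}+c'(b)\,q_\xi=f'(q)\,q_b-1$ with asymptotic conditions $q_b(\pm\infty)=h'_\pm(b)\ge 0$, and then invoke a sliding / maximum principle argument (as in the references in \cite{bss}) to conclude $q_b\ge 0$ for $|b|<b_0$ after possibly shrinking $b_0$. Finally, for \eqref{takis76.5} I would simply Taylor expand: the third identity in \eqref{takis772} gives $c(b)=-\alpha_0 b+O(b^2)$, and substituting $b=-\ep(\dot B^\ep(t)-a)$ yields $c^\ep/\ep=\alpha_0(\dot B^\ep(t)-a)+O(\ep(\dot B^\ep(t)-a)^2)$, whose remainder is $o(1)$ under \eqref{takis1.1} and the standing smallness $|b|<b_0$ (which forces $\ep\,\dot B^\ep$ to stay bounded).

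The only genuinely non-routine step is the monotonicity claim $q_b\ge 0$ in \eqref{takis76.4}: it is a global statement about the one-parameter family of fronts and cannot be read off from the quantitative bounds of \eqref{takis771} alone; the natural route is either a comparison/sliding argument on the traveling wave ODE, or a direct application of the maximum principle to the linearization around $q(\cdot,b)$. Once this is in hand, every other assertion reduces to chain rule plus \eqref{takis771}–\eqref{takis772} and the mild-approximation bounds \eqref{takis1.1}.
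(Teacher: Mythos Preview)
The paper does not actually give a proof of this lemma: it is introduced with the sentence ``we summarize in the next lemma, \emph{without a proof}, the key properties of $q^\ep$ and $c^\ep$ that we need later.'' So there is nothing to compare your argument against; you have simply filled in what the authors deliberately omitted.

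Your derivation is the natural one and is correct in spirit. Each of \eqref{takis76.2}--\eqref{takis76.5} is indeed just chain rule applied to $q^\ep(\xi,t,a)=q(\xi,b)$ with $b=-\ep(\dot B^\ep(t)-a)$, combined with the quantitative bounds \eqref{takis771}--\eqref{takis772} and the mild-approximation hypotheses \eqref{takis1.1}. You are also right that the only step with any content is the monotonicity $q_b\ge 0$ in \eqref{takis76.4}; the sliding/maximum-principle route you sketch for the linearized traveling-wave ODE is the standard way to get it (and is exactly what the references in \cite{bss} do). Two small remarks: your reading of \eqref{takis76.3} at the rescaled argument is the correct interpretation---as written the inequality only matches the exponential decay in \eqref{takis771} once one substitutes $\xi=W^{a,\delta,\ep}/\ep$ with $|W^{a,\delta,\ep}|\ge\eta$, which is precisely how it is used in the proof of Proposition~\ref{athens1}; and in your justification of \eqref{takis76.5} the remainder $O(\ep(\dot B^\ep-a)^2)$ is controlled not merely by boundedness of $\ep\dot B^\ep$ but by writing it as $|b|\cdot|\dot B^\ep-a|$ and using $|b|=\ep|\dot B^\ep-a|\to 0$ together with the standing smallness $|b|<b_0$---the paper's formulation of \eqref{takis76.5} and its use in \eqref{athens102} are themselves somewhat informal on this point.
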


We   prove Theorem~\ref{main} assuming  that $u^\ep_0$ in \eqref{allencahn} is well prepared, that is, has the form 
\beq\label{takis100.1}
u^\ep_0(x)=q^\ep(\dfrac{\rho (x)}{\ep}, 0),
\eeq
where $\rho$ is the signed distance function to $\Gamma_0$ and $q(\cdot, 0)$ is the standing wave solution of \eqref{takis51}.
\smallskip

Going from \eqref{takis100.1} to a general $u^\ep_0$ as in the statement of the theorem is standard in the theory of front propagation. It amounts to   showing  that, in a conveniently 
small time interval, $u^\ep$ can be ``sandwiched'' between functions like the ones in \eqref{takis100.1}. Since this is only technical but standard, we omit the details and we refer to Chen~\cite{c} and \cite{bs} for the details.  
\smallskip

The proof of the result is a refinement of the analogous results of \cite{ess} and \cite{bss}. It is based on using two approximate flows, which evolve with normal velocity $V=-\text{tr}[Dn] + \alpha_0 ( \dot B^\ep(t) - \ep a)$, to construct a subsolution and supesolution \eqref{allencahn}. Since the arguments are similar, here we show the details only for the supersolution construction. 
\smallskip

For fixed $\delta, a>0$ to be chosen below, we consider the solution $w^{a, \delta, \ep}$ of \beq\label{aegean1}
\begin{cases}
w^{a, \delta, \ep}_t = \text{tr}(I - \widehat {Dw^{a, \delta, \ep}} \otimes \widehat {Dw^{a, \delta, \ep}} )D^2w^{a, \delta, \ep}  + \alpha_0(  \dot B^\ep - a) |Dw^{a, \delta, \ep}| \  \text{in} \ \R^d\times (0,\infty), \\[1mm]
 w^{a, \delta, \ep}(\cdot,0)=\rho +\delta,
 \end{cases}
\eeq 

Let  $\rho^{a, \delta, \ep}$ be the signed distance from $\{w^{a, \delta, \ep}=0\}$. It follows from Theorem~\ref{takis21} (see also Theorem~$3.1$ in \cite{bss}) that 
\beq\label{aegean2}
 \rho_t^{a, \delta, \ep} - \Delta \rho^{a, \delta, \ep} {\bf -} \alpha_0( \dot B^\ep -a) \geq 0 \ \text{in} \ \{\rho^{a, \delta, \ep}>0\}.
 \eeq

Following the proof of Lemma~$3.1$ of \cite{ess} we define 
\beq\label{aegean3}
W^{a, \delta, \ep}=\eta_\delta (\rho^{a, \delta, \ep}),
\eeq
where $\eta_\delta:\R\to \R$ is  smooth and such that, for some $C>0$ independent of $\delta$,
\beq\label{aegean4}
\begin{cases}
\eta_\delta \equiv -\delta \ \text{in} \ (-\infty, \delta/4], \quad \eta_\delta \leq -\delta/2   \ \text{in} \ (-\infty, \delta/2], \quad \eta_\delta (z)= z-\delta  \ \text{in} \ [\delta/2, \infty), \ \text{and} \\[1mm]
%\eta_\delta \leq \delta/2   \ \text{in} \ (-\infty, \delta/2], \\[1mm]
%\eta_\delta (z)= z-\delta  \ \text{in} \ [\delta/2, \infty), \\[1mm]
0\leq \eta_\delta'\leq C \ \text{and} \ |\eta_\delta''|\leq C\delta ^{-1} \ \text{on} \ \R.
\end{cases}
\eeq
Let $T^\star$ be the extinction time of $\{w^{a, \delta, \ep}=0\}$.
A straight forward modification of Lemma~$3.1$ of \cite{ess} leads to the  following claim, which we state without a proof. 
\begin{lemma}\label{aegean4} Assume \eqref{nonl},   \eqref{takis1} and \eqref{takis1.1}. There exists a constant $C>0$, which is independent of $\ep, \delta$ and $a$, such that
\beq\label{aegean5}
W^{a, \delta, \ep}_t - \Delta W^{a, \delta, \ep} - \alpha_0( \dot B^\ep - a) |D W^{a, \delta, \ep}|\geq -\dfrac{C}{\delta} \ \text{in} \ \R^d \times [0, T^\star],
\eeq
\vskip-.1in
\beq\label{aegean6}
W^{a, \delta, \ep}_t - \Delta W^{a, \delta, \ep} - \alpha_0( \dot B^\ep - a) \geq 0 \ \text{in} 
\ \{\rho^{a, \delta, \ep} > \delta /2\},
\eeq
and  
\beq\label{aegean7}
|D W^{a, \delta, \ep}|=1 \ \text{in} 
\ \{\rho^{a, \delta, \ep} >\delta /2\}.
\eeq
\end{lemma}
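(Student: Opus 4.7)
The plan is to partition $\R^d$ into three regions determined by $\rho:=\rho^{a,\delta,\ep}$, namely $\{\rho\le \delta/4\}$, the transition annulus $\{\delta/4<\rho<\delta/2\}$, and the exterior $\{\rho\ge \delta/2\}$, and check each of the three claims on each region. The only information I will use about $w^{a,\delta,\ep}$ is the differential inequality \eqref{aegean2}, together with the standard eikonal identity $|D\rho|=1$ on $\{\rho>0\}$ valid for the signed-distance function.

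On $\{\rho\le \delta/4\}$ the formula $\eta_\delta\equiv -\delta$ makes $W^{a,\delta,\ep}$ constant, so every term on the left-hand side of \eqref{aegean5} vanishes and the inequality trivially reads $0\ge -C/\delta$. On the exterior $\{\rho\ge \delta/2\}$ the formula $\eta_\delta(z)=z-\delta$ is linear, giving $W^{a,\delta,\ep}=\rho-\delta$, whence $|DW^{a,\delta,\ep}|=|D\rho|=1$ (proving \eqref{aegean7}), $\Delta W^{a,\delta,\ep}=\Delta\rho$ and $W^{a,\delta,\ep}_t=\rho_t$; then \eqref{aegean6} follows directly from \eqref{aegean2}, which also yields \eqref{aegean5} on this region.

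The substantive work is in the transition annulus, where $\eta_\delta'\ge 0$ and $|\eta_\delta''|\le C/\delta$. There the chain rule together with $|D\rho|=1$ gives
\[
 W^{a,\delta,\ep}_t=\eta_\delta'(\rho)\rho_t, \qquad DW^{a,\delta,\ep}=\eta_\delta'(\rho)D\rho, \qquad \Delta W^{a,\delta,\ep}=\eta_\delta''(\rho)+\eta_\delta'(\rho)\Delta\rho,
\]
and $|DW^{a,\delta,\ep}|=\eta_\delta'(\rho)$. Substituting and invoking \eqref{aegean2} I obtain
\[
 W^{a,\delta,\ep}_t-\Delta W^{a,\delta,\ep}-\alpha_0(\dot B^\ep-a)|DW^{a,\delta,\ep}| \;=\; \eta_\delta'(\rho)\bigl(\rho_t-\Delta\rho-\alpha_0(\dot B^\ep-a)\bigr) - \eta_\delta''(\rho) \;\ge\; -C/\delta,
\]
since the first term is non-negative by \eqref{aegean2} and $\eta_\delta''\le C/\delta$. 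Assembling the three regions produces \eqref{aegean5}.

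The main obstacle is that $\rho^{a,\delta,\ep}$ is not classically differentiable: it solves its PDE (and \eqref{aegean2}) only in the pathwise viscosity sense and is merely Lipschitz with $|D\rho|\le 1$ a.e. I would therefore justify the chain-rule manipulation above in the viscosity framework, exploiting that $\eta_\delta$ is smooth and nondecreasing so that $W^{a,\delta,\ep}=\eta_\delta(\rho^{a,\delta,\ep})$ inherits the relevant sub/supersolution property from $\rho^{a,\delta,\ep}$, and by regularizing both $\rho$ and the driving path $B^\ep$ and passing to the limit via the stability statement of Theorem~\ref{takis20}. The $C/\delta$ contribution from $\eta_\delta''$ is robust under such smoothing, and this is precisely the adaptation of Lemma~$3.1$ of \cite{ess} from a deterministic time dependence to the pathwise setting afforded by Theorem~\ref{takis21}.
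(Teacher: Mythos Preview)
Your argument is correct and is precisely the ``straightforward modification of Lemma~3.1 of \cite{ess}'' that the paper invokes; the paper itself omits the proof, and your three-region chain-rule computation together with \eqref{aegean2} is the intended route. One small over-complication: since $B^\ep\in C^2$ by \eqref{takis1}, equation \eqref{aegean1} is an ordinary (deterministic) level-set PDE with smooth time dependence, so no pathwise/stochastic regularization of the driver is needed---the only issue is the usual viscosity-sense interpretation of the chain rule for the Lipschitz function $\rho^{a,\delta,\ep}$, which is handled exactly as in \cite{ess}.
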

\smallskip

Finally, we define
\begin{equation}\label{aegean8}
U^{a,  \delta, \ep}(x,t):=q^\ep(\dfrac{W^{a,  \delta, \ep}(x,t)}{\ep}, t, a) \text{ on } \ \R^d\times [0,\infty).
\eeq
\begin{proposition}\label{athens1}
Assume \eqref{nonl}, \eqref{takis1}, \eqref{takis1.1} and \eqref{takis2}. For every $a\in (0,1)$, there exists $\delta_0=\delta_0(a)>0$ such that, for all  $\delta\in (0, \delta_0)$, there exists $\ep_0=\ep_0(\delta,a)>0$, such that, if $\ep\in (0,\ep_0)$, then $U^\ep$ is a supersolution of \eqref{allencahn}.
% if $\ep\leq \ep_0=\ep_0(\delta,a)$ and $\delta\leq \delta_0=\delta_0(a)$.
\end{proposition}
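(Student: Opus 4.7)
The plan is to substitute $U^{a,\delta,\ep}$ into the Allen--Cahn operator $\mathcal{L}[u] := u_t - \Delta u + \ep^{-2}(f(u) - \ep\dot B^\ep)$, apply the chain rule, and use the traveling-wave ODE \eqref{takis51} to eliminate the $\ep^{-2} f(q^\ep)$ singularity. Writing $W = W^{a,\delta,\ep}$ and $q^\ep = q^\ep(W/\ep, t, a)$, the ODE $c^\ep q^\ep_\xi + q^\ep_{\xi\xi} = f(q^\ep) - b$ with $b = -\ep(\dot B^\ep - a)$ reduces $\mathcal{L}[U^{a,\delta,\ep}]$ to a sum of four pieces: a \emph{main} term $\frac{q^\ep_\xi}{\ep}\bigl(W_t - \Delta W + \frac{c^\ep}{\ep}\bigr)$ encoding the competition between curvature flow and wave speed; a \emph{non-eikonal} term $\frac{q^\ep_{\xi\xi}}{\ep^2}(1 - |DW|^2)$; the time derivative $q^\ep_t$; and a \emph{residual} built from the balance $b/\ep^2 - \dot B^\ep/\ep$ (which delivers the desired margin of order $a/\ep$) together with the Taylor remainder of $b \mapsto c(b)$. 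The proof then reduces to showing that this sum is nonnegative.

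I would split the space-time domain according to the level sets of $\rho^{a,\delta,\ep}$. On the \emph{outer} region $\{\rho^{a,\delta,\ep} > \delta/2\}$, \eqref{aegean7} gives $|DW| = 1$, killing the non-eikonal term; using \eqref{aegean6} together with the expansion $c^\ep/\ep = -\alpha_0(\dot B^\ep - a) + \mathrm{o}(1)$ from \eqref{takis76.5}, the bracketed quantity $W_t - \Delta W + c^\ep/\ep$ is nonnegative up to an $\mathrm{o}(1)$ loss, which is absorbed by the margin together with $q^\ep_t = \mathrm{o}(1)$ from \eqref{takis76.2}. The sharp quantitative input is \eqref{takis772}, namely $|c(b)/b + \alpha_0| = O(|b|) = O(\ep)$, which is precisely what makes the Taylor remainder genuinely small. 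On the \emph{inner} region $\{\rho^{a,\delta,\ep} \leq \delta/2\}$ one has $W \leq -\delta/2$, hence $\xi = W/\ep \leq -\delta/(2\ep)$ lies deep in the left tail of the wave; the exponential estimate \eqref{takis76.3} then yields $\ep^{-1}q^\ep_\xi + \ep^{-2}|q^\ep_{\xi\xi}| \leq C e^{-C\delta/\ep}$, which beats both the $(1 - |DW|^2)$ factor and the $C/\delta$ loss from the construction of $\eta_\delta$ in Lemma~\ref{aegean4}. The nonlinearity $f(U^\ep)/\ep^2$ is controlled because $U^\ep$ is exponentially close to the equilibrium $h_-(b)$ of $f(\cdot) = b$, so $f(U^\ep)/\ep^2 - \dot B^\ep/\ep$ reduces, up to exponentially small corrections, to the margin of order $a/\ep$.

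The parameter hierarchy is standard: fix $a \in (0,1)$, then choose $\delta_0(a)$ so that the outer-region Taylor and curvature errors are bounded by $a q^\ep_\xi/(4\ep)$, and finally select $\ep_0(a,\delta)$ so small that $|b| < b_0$ (making Lemma~\ref{takis76} applicable), the inner-region exponentials $e^{-C\delta/\ep}$ defeat all polynomial-in-$1/\delta$ losses, and the $\mathrm{o}(1)$ corrections from Lemma~\ref{takis76.1} are bounded by $a/2$. The main obstacle is the sharp cancellation in the outer region between the $O(1/\ep)$ noise contributions from $c^\ep/\ep$ and from $\dot B^\ep/\ep$: these must match at leading order by virtue of the choice of $\alpha_0$ in \eqref{takis772}, and any slippage worse than $\mathrm{o}(1)$ would consume the $a$-margin and destroy the supersolution property. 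A symmetric construction with $a \in (-1,0)$ and $w_0 = \rho - \delta$ yields the subsolution, and Theorem~\ref{main} then follows via the comparison principle and the stability statement of Theorem~\ref{takis20} as $\ep \to 0$ and $a, \delta \to 0$.
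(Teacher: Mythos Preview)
Your proposal is correct and follows essentially the same approach as the paper: both substitute $U^{a,\delta,\ep}$ into the Allen--Cahn operator, use the traveling-wave ODE to cancel the $\ep^{-2}f$ singularity and extract the $a/\ep$ margin, and then split according to the size of $\rho^{a,\delta,\ep}$ relative to $\delta$, invoking \eqref{aegean5}--\eqref{aegean7} and the estimates of Lemma~\ref{takis76.1} in each region. The only cosmetic difference is that the paper uses a three-way split ($\rho^{a,\delta,\ep}\leq \delta/2$, $\delta/2<\rho^{a,\delta,\ep}<2\delta$, $\rho^{a,\delta,\ep}>2\delta$), handling the far-outer region via the exponential tail \eqref{takis76.3} as in the inner case, whereas your unified treatment of $\{\rho^{a,\delta,\ep}>\delta/2\}$ via \eqref{aegean6}--\eqref{aegean7} works equally well.
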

\begin{proof} The arguments are similar to the ones used to  prove the analogous result (Proposition~$10.2$) in  \cite{bss}, hence, here, we only sketch the argument.  Notice that, since we are working at the $\ep>0$ level, we do not have to be concerned about anything stochastic. Below, for simplicity, we argue  as if $w^{a,\delta, \ep}$ had actual derivatives, and we leave it up to the reader to argue in the viscosity sense. Note that $\text{o}(1)$ stands for a  function such that $\lim_{\ep\to 0} \text{o}(1)=0.$ Finally, , throughout the proof, $q^\ep$ and all of its derivatives are evaluated at $(W^{a, \delta, \ep}, t,a),$ a fact which will not be repeated. 
%We  point out, however, that, since we work with  $\ep>0$, there is no need to be concerned  about anything stochastic. Thus, since the arguments are very similar to the ones used to prove Proposition~$10.2$ in \cite{bss}, here  we present only a  sketch. 
%\smallskip

\smallskip
Using the equation satisfied by $q^\ep$, we find
\beq\label{athens100}
\begin{split}
U^{a,  \delta, \ep}_t -\Delta U^{a,  \delta, \ep}  + \dfrac{1}{\ep^2}[f(U^{a,  \delta, \ep} ) - \ep \dot B^\ep(t))=&J^\ep -\dfrac{1}{\ep^2}q^\ep_{\xi \xi}(|DW^{a,  \delta, \ep} |^2 -1)\\[1mm] 
&+\dfrac{1}{\ep}q^\ep_\xi (DW^{a,  \delta, \ep}_t-\Delta W^{a,  \delta, \ep} +\dfrac{c^\ep}{\ep}) + \dfrac{a}{\ep}, 
\end{split}
\eeq
%where $q^\ep_\xi$ and $q^\ep_{\xi \xi}$ are evaluated at $(W^{a,  \delta, \ep} (x,t)/\ep, t, a)$,
where
\beq\label{athens101}
J^\ep(x,t):=q_b (\dfrac{W^{a,  \delta, \ep} (x,t)}{\ep}, \ep \dot B^\ep(t) -\ep a)\ep \ddot B^\ep(t).
\eeq 
In view of its definition, it is immediate that $|DW^{a,  \delta, \ep}|\leq C$ with $C$ as in \eqref{aegean4}, while \eqref{takis76.2} yields  that, as $\ep \to 0$ and uniformly in $(x,t,
\delta, a)$, 
\beq\label{athens102} J^\ep=\dfrac{\text{o}(1)}{\ep}.\eeq
\smallskip

Next we consider three different cases which depend on the relationship bewteen $\rho^{a, \delta, \ep}$ and $\delta$.
\smallskip

If $\delta /2<\rho^{a, \delta, \ep}<2\delta$, we use \eqref{aegean6}, \eqref{aegean7}, \eqref{takis76.5} and the form of $\eta_\delta$, to rewrite  \eqref{athens100} as 
\beq\label{athens102}
\begin{split}
U^{a,  \delta, \ep}_t -\Delta U^{a,  \delta, \ep}  + \dfrac{1}{\ep^2}[f(U^{a,  \delta, \ep} ) - \ep \dot B^(t))&\geq - \dfrac{1}{\ep}\left[ q^\ep_\xi\right(\frac {c^\ep}{\ep} + \alpha_0 (\ep \dot B^\ep -\ep a) + a  + \text{o}(1),]\\[1mm]
& \geq - \dfrac{1}{\ep}\left[ q^\ep_\xi \text{o}(1)+ a  + \text{o}(1)\right],
\end{split}
\eeq
 It  follows easily that, if $\ep$ and $\delta$ are sufficiently small,  the right side of \eqref{athens102} is positive.
 % if $\ep$ and $\delta$ are small. 

\smallskip
If $\rho^{a,\delta,\ep} \leq \delta/2$, the choise of $\eta_\delta$ implies that
$W^{a, \delta, \ep} \leq -\delta/2.$ 
Hence, \eqref{takis76.3} yields that, for some $C>0$, 
$$\dfrac{1}{\ep} q^\ep_\xi  +\dfrac{1}{\ep^2} |q^\ep_{\xi \xi}| \leq C e^{-C\delta/\ep}.$$
%\smallskip

Then using that $|DW^{a,  \delta, \ep}| \leq C$ and \eqref{aegean5} and \eqref{aegean6} in \eqref{athens102}, we get
$$U^{a,  \delta, \ep}_t -\Delta U^{a,  \delta, \ep}  + \dfrac{1}{\ep^2}[f(U^{a,  \delta, \ep} ) + \ep \dot B(t)] \leq -C(\frac{1}{\delta} + 1) e^{-C\delta/\ep} + \text{o}(1) + \dfrac{a}{\ep};$$
notice that, for $\ep$ small enough the right hand side of the inequality above is poistive.
\smallskip

Finally, if $\rho^{a,\delta,\ep} >2\delta$, we use \eqref{aegean6} and \eqref{takis76.3} to conclude as in the previous case.

\end{proof}

We are now ready for the proof of the main result.

\begin{proof}[The proof of Theorem~\ref{main}]
We fix $(x_0,t_0) \in \R^d\times [0,T^\star)$ such that $w(x_0,t_0)=-\beta<0.$ The stability of the pathwise solutions yields that, in the limit $\ep\to 0$, $\delta \to 0$ and $a\to 0$ and uniformly in $(x,t)$, $w^{a,\delta,\ep} \to w$. Thus, we choose sufficiently small $\ep, \delta$ and $a$ so that 
\beq\label{takis110}
w^{a,\delta,\ep} (x_0,t_0) <-\dfrac{\beta}{2}<0.
\eeq 

Then $U^{a,\delta,\ep}$, which is defined in \eqref{aegean7}, is a supersolution of \eqref{allencahn} for sufficiently small $\ep$ and also satisfies, in view of \eqref{takis76.4}, 
$$U^{a,\delta,\ep}(x,0) \geq q^\ep(\dfrac{\rho(x)}{\ep}, 0) \ \text{in} \  \R^d,$$
since 
$$w^{a,\delta,\ep}(x,0)=\eta_\delta(\rho(x) +\delta) \geq \rho(x).$$
The comparison of viscosity solutions of \eqref{allencahn}  then gives 
$$u^\ep \leq U^{a,\delta,\ep} \ \text{in} \ \R^d\times [0,T^\star).$$
We also know, in view of \eqref{takis110}, that $\rho^{a,\delta,\ep}(x_0,t_0)<0$, and, hence,
$$\limsup\limits_{\ep\to 0} u^\ep(x_0,t_0)\leq \limsup\limits_{\ep\to 0} U^{a, \delta, \ep}(x_0,t_0)=-1.$$
For the reverse inequality, we remark that $\hat U(x,t)=-1-\gamma$  is a subsolution of \eqref{allencahn} if $\ep$ and $\gamma>0$ are chosen sufficiently small as it can be seen easily from
$$\hat U_t-\Delta \hat U +\dfrac{1}{\ep^2} (f(\hat U) - \ep \dot B^\ep)\leq C + \dfrac{1}{\ep^2}[-\gamma f'(-1) +\text{o}(1)].$$
The maximum principle then gives, for all $(x,t)$ and sufficiently small $\gamma>0$, 
$$\liminf\limits_{\ep\to 0} u^\ep(x_0,t_0) \geq -1 -\gamma.$$
The conclusion now follows after letting $\gamma \to 0$. 
\smallskip

We remark that a simple modification of the argument above yields the local uniform convergence of $u^\ep$ to $-1$ in compact subsets of $\{w<0\}$.

\end{proof}
%\end{document}
%We continue with some preliminary steps. 

\smallskip

%\end{proof}
%\bigskip

\bigskip

%\bigskip
%
%\bigskip
%
%\newpage

%\next{page}

\noindent ($^{1}$) Coll\`{e}ge de France,
11 Place Marcelin Berthelot, 75005 Paris, 
and  
CEREMADE, 
Universit\'e de Paris-Dauphine,
Place du Mar\'echal de Lattre de Tassigny,
75016 Paris, FRANCE\\ 
email: lions@ceremade.dauphine.fr
\\ \\
\noindent ($^{2}$) Department of Mathematics 
University of Chicago, 
5734 S. University Ave.,
Chicago, IL 60637, USA, \ \  
email: souganidis@math.uchicago.edu
\\ \\
($^{3}$) Partially supported by the Air Force Office for Scientific Research  grant FA9550-18-1-0494.
\\ \\ 
($^{4}$)  Partially supported by the National Science Foundation grant DMS-1600129, the Office for Naval Research grant N000141712095 and the Air Force Office for Scientific Research grant FA9550-18-1-0494.

\end{document}